\DeclareMathDelimiter{\orbrack}{\mathopen}{operators}{"5D}{largesymbols}{"03}
\DeclareMathDelimiter{\clbrack}{\mathclose}{operators}{"5B}{largesymbols}{"02}
\def\intcc#1{\ensuremath{[#1]}}
\def\intoo#1{\ensuremath{\orbrack#1\clbrack}}
\def\intoc#1{\ensuremath{\orbrack#1]}}
\def\intco#1{\ensuremath{[#1\clbrack}}
\def\Hintcc#1{\ensuremath{\llbracket#1\rrbracket}}
\def\defas{\ensuremath{\mathrel{:=}}}
\DeclareMathOperator{\id}{id}
\def\Set#1#2{\ensuremath{
\left\{#1\,\middle|\,#2\right\}
}}
\def\norm#1{\|  #1 \| }
\def\abs#1{|  #1 | }
\newtheorem{theorem}{Theorem}
\newtheorem{lemma}[theorem]{Lemma}
\newtheorem{remark}{Remark}
\newtheorem{problem}{Problem}
\newtheorem{assumption}{Assumption}
\newtheorem{definition}{Definition}
\begin{document}


\title{Underapproximating Safe Domains of Attraction for Discrete-Time Systems Using Implicit Representations of Backward Reachable Sets} 
\author{Mohamed Serry\thanks{M.  Serry and J. Liu are with the Department of Applied Mathematics, University of Waterloo, Waterloo, Ontario, Canada. (email:
\{mserry, j.liu\}@uwaterloo.ca).}  and 
 Jun Liu
 \thanks{This work was funded in part by the Natural Sciences and Engineering Research Council of Canada
and the Canada Research Chairs program.}}   
\date{}  
\IEEEoverridecommandlockouts

\maketitle
\thispagestyle{empty}

\begin{abstract}
Analyzing and certifying the   stability and attractivity of nonlinear systems is a topic of ongoing  research interest that has been extensively investigated by control theorists and engineers for many years. However, accurately estimating domains of attraction for nonlinear systems remains a challenging task, where existing  estimation methods tend to be conservative or limited to low-dimensional systems. In this work, we propose an iterative approach to accurately underapproximate  safe (state-constrained) domains of attraction for general discrete-time autonomous nonlinear systems. Our approach relies on  implicit representations of safe backward reachable sets of { initial} safe regions of attraction, where such initial regions can {be}  easily constructed using, e.g., quadratic Lyapunov functions. The iterations of our approach are monotonic (in the sense of set inclusion), {converging to the safe domain of attraction}. Each iteration results in a safe region of attraction, represented as a sublevel set, that underapproximates the safe domain of attraction. { The sublevel set representations of the resulting regions of attraction can be efficiently utilized in verifying  the inclusion of  given points of interest in the safe domain of attraction}. We illustrate our approach through two numerical examples, involving two- and four-dimensional nonlinear systems.
\end{abstract}
\section{Introduction}
When analyzing a control system, several crucial properties are typically sought to be ensured, including the  stability and attractivity of the system's equilibrium points. These properties provide  robustness guarantees when the system's equilibrium points  are  slightly perturbed due to external disturbances.  Besides, it is also important to ensure that the control system of interest  possesses invariance properties, with state values always lying within a specified safe  domain. 

Generally speaking,  nonlinear systems typically lack global stability and attractivity  properties, and invariance cannot be guaranteed for all the points of the safe domain (i.e., system trajectories starting from the safe domain may leave it). Hence, regions where such properties hold are estimated and considered when specifying  safe operational domains for dynamical systems. In this paper, we consider the problem of estimating the state-constrained or safe domain of attraction (DOA) of a general discrete-time autonomous nonlinear system. The safe DOA consists of the state values that are guaranteed to be driven to an equilibrium point of interest under the system's dynamics, while always satisfying specified safety state constraints.  

In the literature, DOAs are predominantly estimated using the framework of Lyapunov functions. This framework typically assumes candidate Lyapunov functions of fixed templates (e.g., quadratic forms and sum-of-squares polynomials). The parameters of such templates are then chosen to satisfy the standard Lyapunov conditions, or the  more relaxed multi-step  and  non-monotonic Lyapunov conditions \cite{ahmadi2008non,bobiti2014computation}.  This approach is generally restrictive  due to the use of fixed templates, providing, if existent, conservative estimates of  DOAs. 

DOAs can be  characterized  as sublevel sets of  particular Lyapunov functions that are unique solutions to some nonlinear functional equations (e.g., maximal Lyapunov, Zubov, and Bellman-type equations \cite{balint2006methods,giesl2007determination,oshea1964extension,xue2020characterization}). However, solutions to such functional equations are very difficult, if not impossible, to obtain analytically, and numerical solutions to such equations are limited to low-dimensional systems. In addition, numerical solutions to such equations do not necessarily provide certifiable DOA estimates (i.e., the resulting DOA estimates may not correspond to actual regions of attraction).

Recently, there has been a growing interest in using learning-based approaches to estimate DOAs, where neural networks are trained to satisfy standard Lyapunov conditions  and then verification tools (e.g., interval arithmetic and mixed-integer programming) are implemented to ensure that the trained neural networks provide certifiable DOA estimates \cite{wu2023neural,dai2021lyapunov,chen2021learning}.  Despite the high computational efficiency associated with training neural networks, neural network verification suffers from high computational demands due to state-space discretization. 

An important point to raise here is the following. For most of the approaches, where the estimates of  DOAs are given as sublevel sets of polynomial or neural network functions, pointwise inclusion can be verified efficiently, which basically requires function(s) evaluations at a point. However, using such set representations in verifying set inclusion (i.e, verifying  that a set of interest is contained in an estimate of the DOA) is computationally demanding, requiring the use of the badly-scaled verification tools mentioned above. The complexity of the sublevel set representations may also increase  the computational demands associated with the verification process.

In \cite{balint2006methods}, an interesting approach  was proposed to under-estimate  DOAs using backward reachable sets of carefully constructed  balls that are regions of attraction. The approach in \cite{balint2006methods} did not account for state constraints,  and it was designed particularly for continuously differentiable systems. In addition, the approach in $\cite{balint2006methods}$    did not provide a systemic way to represent backward reachable sets. 
 
 { In \cite{kothyari2024construction}, backward reachable set computations based on the method in \cite{yang2022efficient} were  utilized to obtain regions of null controllability. The approach in \cite{kothyari2024construction} did not account for state constraints and the  backward reachable sets were conservatively estimated through  linearization at the origin and approximations of the Pontryagin difference, hence, the method does not provide any convergence guarantees}. Recently, implicit  representations  of backward reachable sets have been utilized in computing invariant sets \cite{rakovic2022implicit}, and they have been shown to be efficient in verifying pointwise inclusion. However, and up to our knowledge, implicit representations   have not been adopted in approximations of DOAs.

Motivated by the utilities of  backward reachable sets and their implicit representations in set-based computations, we propose an iterative approach that provides arbitrarily precise underapproximations of the  safe (state-constrained) DOA of a general  discrete-time autonomous nonlinear system.  Each iteration of the proposed method results in a safe region of attraction,  with a sublevel set representation, that underapproximates the safe DOA. Such a set-level representation can be efficiently utilized in verifying pointwise inclusion.

The organization of this paper is as follows:  the necessary preliminaries and notation are introduced in Section \ref{sec:Preliminaries}, the relationship between  safe backward reachable sets  and safe DOAs is established and  a general iterative approach to compute safe backward reachable sets is discussed in Section \ref{sec:BRS}, the utilization of implicit representations of backward reachable sets in estimating  safe DOAs is illustrated in Section \ref{sec:ImplicitBRS}, 
a brief discussion on constructing  initial safe regions of attraction that can be used in the iterative approach is  introduced in Section \ref{sec:EllipsoidalROA}, the proposed method is illustrated  through two numerical examples in Section \ref{sec:NumericalExamples}, and the study is concluded in Section \ref{sec:Conclusion}.

\section{Notation and Preliminaries}
\label{sec:Preliminaries}
Let $\mathbb{R}$, $\mathbb{R}_+$,  $\mathbb{Z}$, and $\mathbb{Z}_{+}$ denote
the sets of real numbers, non-negative real numbers, integers, and
non-negative integers, respectively, and
$\mathbb{N} = \mathbb{Z}_{+} \setminus \{ 0 \}$.
Let $\intcc{a,b}$, $\intoo{a,b}$,
$\intco{a,b}$, and $\intoc{a,b}$
denote closed, open and half-open
intervals, respectively, with end points $a$ and $b$, and
 $\intcc{a;b}$, $\intoo{a;b}$,
$\intco{a;b}$, and $\intoc{a;b}$ stand for their discrete counterparts,
e.g.,~$\intcc{a;b} = \intcc{a,b} \cap \mathbb{Z}$, and
$\intco{1;4} = \{ 1,2,3 \}$.  In $\mathbb{R}^{n}
$, the relations $<$, $\leq$, $\geq$, and
$>$ are defined component-wise, e.g., $a < b$, where $a,b\in \mathbb{R}^{n}$, iff $a_i < b_i$ for
all $i\in  \intcc{1;n}$. For {$a, b \in \mathbb{R}^n$, $a \leq b$},
the closed hyper-interval (or hyper-rectangle) $\Hintcc{a,b}$ denotes the set $\Set{x\in \mathbb{R}^{n}}{a\leq x\leq b}$. Let $\norm{\cdot}$ and $\norm{\cdot}_{\infty}$ denote the Euclidean and maximal norms on $\mathbb{R}^{n}$, respectively, and $\mathbb{B}_{n}$ be the $n$-dimensional closed unit ball induced by $\norm{\cdot}$. The $n$-dimensional zero vector is denoted by $0_{n}$.  Let $\id_{n}$ denote the $n\times n$ identity matrix. For $A\in \mathbb{R}^{n\times m}$, $\norm{A}$ and  $\norm{A}_{\infty}$ denote the matrix norms of $A$ induced by the Euclidean and maximal norms, respectively. Given $x\in \mathbb{R}^{n}$  and $A\in \mathbb{R}^{n\times m}$, $\abs{x}\in \mathbb{R}^{n}_{+}$ and $\abs{A}\in \mathbb{R}_{+}^{n\times m}$ are defined as $|x|_{i}\defas |x_{i}|,~i\in \intcc{1;n}$, and $\abs{A}_{i,j}\defas \abs{A_{i,j}},~(i,j)\in \intcc{1;n}\times \intcc{1;m}$, respectively. Let $\mathcal{S}^{n}$ denote the set of $n\times n$ real symmetric matrices. Given $A\in \mathcal{S}^{n}$, $\underline{\lambda}(A)$ and $\overline{\lambda}(A)$ denote the minimum and maximum eigenvalues of $A$, respectively.  Let $\mathcal{S}_{++}^{n}$ denote the set of $n\times n$ real symmetric positive definite matrices. Given $A\in \mathcal{S}_{++}^{n}$, $A^{\frac{1}{2}}$ denotes the unique real symmetric positive definite matrix $K$ satisfying $A=K^2$ \cite[p.~220]{Abadir2005matrix}, and $A^{-\frac{1}{2}}\defas (A^{\frac{1}{2}})^{-1}$. Note that for $A\in \mathcal{S}^{n}_{++}$, $\underline{\lambda}(A)\norm{x}^{2}\leq x^{\intercal}Ax=\norm{A^{\frac{1}{2}}x}^{2}\leq \overline{\lambda}(A)\norm{x}^{2}$ for all $x\in \mathbb{R}^{n}$. The interior of $X\subseteq \mathbb{R}^{n}$, denoted by $\mathrm{int}(X)$,  is the set $\Set{x\in X}{\exists r\in \mathbb{R}_{+}\setminus\{0\}~\text{s.t.}~ x+r\mathbb{B}_{n}\subseteq X}$. Given $f\colon X\rightarrow Y$, $P\subseteq X$, and $Q\subseteq Y$, the image and preimage of $f$ on $P$ and $Q$ are defined as  $f(P)\defas\Set{f(x)}{x\in P}$ and $f^{-1}(Q)\defas\Set{x\in X}{f(x)\in Q}$, respectively.    Given $f\colon X \rightarrow X$ and  $x\in X$,  $f^{0}(x)\defas x$, and for  $M\in \mathbb{N}$, we define $f^{M}(x)$ recursively as follows:   $f^{k}(x)=f(f^{k-1}(x)),~k\in \intcc{1;M}$.  Given a  function $g\colon \mathbb{R}^{n}\rightarrow\mathbb{R}$ and $c\in \mathbb{R}$, $\Psi(g,c)$ denotes  the $c$-sublevel set of $g$ defined as 
    $\Psi(g,c)\defas \Set{x\in \mathbb{R}^{n}}{g(x)\leq c}
    $. 
\begin{lemma}\label{Lem:PreImageSLS}
    Given $g,h\colon \mathbb{R}^{n}\rightarrow\mathbb{R}$, $f\colon \mathbb{R}^{n}\rightarrow \mathbb{R}^{n}$, and $c\in \mathbb{R}$, we have 
   $
f^{-1}(\Psi(g,c))\cap \Psi(h,c)= \Psi(\tilde{g},c),
   $
   where
   $
\tilde{g}(\cdot)\defas \max\{h(\cdot),g(f(\cdot))\}.
   $
\end{lemma}
 {
 \begin{proof}
    $
        x\in f^{-1}(\Psi(g,c))\cap \Psi(h,c)\Leftrightarrow x\in \Psi(h,c)~ \&~ f(x)\in \Psi(g,c)\Leftrightarrow h(x)\leq c~ \&~g(f(x))\leq c \Leftrightarrow \tilde{g}(x)\leq c.
  $
   
 \end{proof}
}

\section{Problem Setup}
 Consider the discrete-time system:
\begin{equation}\label{eq:System}
x_{k+1}=f(x_{k}),~k\in \mathbb{Z}_{+},
\end{equation}
where $x_{k}\in \mathbb{R}^{n}$ is the state and $f\colon \mathbb{R}^{n}\rightarrow \mathbb{R}^{n}$ is the system's transition function. The trajectory of system \eqref{eq:System} starting from $x\in \mathbb{R}^{n}$ is the function $\varphi_{x}\colon\mathbb{Z}_{+}\rightarrow \mathbb{R}^{n}$, defined as follows:
 $   \varphi_{x}(0)=x$,
   $ \varphi_{x}(k+1)=f(\varphi_{x}(k))=f^{k+1}(x)$, $k\in \mathbb{Z}_{+}$.
Without loss of generality, we assume that:
\begin{assumption}\label{Assump:EqPoint}
$0_{n}$ is an equilibrium point of system \eqref{eq:System} (i.e., $f(0_{n})=0_{n}$ or $0_{n}$ is a fixed point of $f$).
\end{assumption}

Let $\mathcal{X}\subseteq\mathbb{R}^{n}$ be a fixed safe  set characterizing the state constraints to be imposed on system \eqref{eq:System}, where we assume that:
\begin{assumption}\label{Assump:0InInterior}
    $
    0_{n}\in \mathrm{int}(\mathcal{X}).
    $
\end{assumption}
{ Let $\mathcal{D}_{0}^{\mathcal{X}}\subseteq \mathbb{R}^{n}$ denote the safe DOA within $\mathcal{X}$, which is defined as:
$$
\mathcal{D}_{0}^{\mathcal{X}}\defas \left\{x\in \mathbb{R}^{n} \middle\vert 
     \varphi_{x}(k)\in \mathcal{X}~\forall k\in \mathbb{Z}_{+}, 
      \lim_{k\rightarrow \infty} \varphi_{x}(k)=0_{n}
\right\}.
$$
The set $\mathcal{D}_{0}^{\mathcal{X}}$ consists of the state values that are guaranteed to be driven to $0_{n}$  under the  dynamics of system \eqref{eq:System}, while remaining within $\mathcal{X}$ at all times. Note that, by  definition, $\mathcal{D}_{0}^{\mathcal{X}}\subseteq \mathcal{X}$.}
We assume that:
\begin{assumption}\label{Assump:0InTheInteriorofD}
    $0_{n}\in \mathrm{int}(\mathcal{D}_{0}^{\mathcal{X}})$.
\end{assumption}
\begin{definition}[Safe region of attraction] A subset $\mathcal{S}\subseteq \mathbb{R}^{n}$ is called a safe region of attraction (ROA)  within $\mathcal{X}$ iff 
     $\mathcal{S}\subseteq \mathcal{D}_{0}^{\mathcal{X}}$,
     $0_{n}\in \mathrm{int}(\mathcal{S})$, and 
     $\mathcal{S}$ is invariant under $f$ (i.e., $f(\mathcal{S})\subseteq \mathcal{S}$).
\end{definition}
{
Intuitively, a safe region of attraction $\mathcal{S}$ is a subset of $\mathcal{D}_{0}^{\mathcal{X}}$, containing $0_{n}$ in its interior, such that for any state $x$ in $\mathcal{S}$, the resulting trajectory stays within $\mathcal{S}$, while converging to the origin. 
}
\begin{problem} \label{Problem}
Let $\mathcal{V}\subseteq \mathcal{X}$ be a given  safe ROA  within $\mathcal{X}$\footnote{We show in Section \ref{sec:EllipsoidalROA}, how to compute a safe ROA for a case where $f$ is sufficiently smooth, and the origin is asymptotically stable.}. Our goal  in this paper is to compute a sequence of sets $\{\mathcal{X}_{k}\}_{k\in \mathbb{Z}}$ such that 
$\mathcal{X}_{k}$ is a safe ROA  within $\mathcal{X}$ for all $k\in \mathbb{Z}_{+}$,
$\mathcal{X}_{k}\subseteq \mathcal{X}_{k+1},~k\in \mathbb{Z}_{+},
$
and 
$
\bigcup_{k\in \mathbb{Z}_{+}}\mathcal{X}_{k}=\mathcal{D}_{0}^{\mathcal{X}}.
$
\end{problem}
Unless otherwise specified, system \eqref{eq:System}, the safe set $\mathcal{X}$, the safe ROA $\mathcal{V}$, and Assumptions \ref{Assump:EqPoint},  \ref{Assump:0InInterior}, and \ref{Assump:0InTheInteriorofD}   are fixed throughout the subsequent discussion. Further assumptions will be imposed in later sections.

\section{Safe Backward Reachable Sets and the Domain of Attraction}
\label{sec:BRS}
We start our attempt to address  Problem \ref{Problem} by introducing safe backward reachable sets. 
\begin{definition}[Safe backward reachable sets]
Given   $\mathcal{Y},\mathcal{Z}\subseteq \mathbb{R}^{n}$, the safe  \textit{any-time} backward reachable set of $\mathcal{Y}$ within $\mathcal{Z}$ is defined as:
\begin{equation}\label{eq:BRS}
\mathcal{R}^{\mathcal{Z}}_{-}(\mathcal{Y})\defas \left\{x\in \mathbb{R}^{n} \middle\vert \begin{array}{l}
    \exists N\in \mathbb{Z}_{+}~\mathrm{s.t.}~\varphi_{x}(k)\in \mathcal{\mathcal{Z}}\\
   \forall k\in \intcc{0;N},~\varphi_{x}(N)\in \mathcal{Y}
  \end{array}\right\}.
\end{equation}
\end{definition}

In the following theorem, we illustrate the iterative construction of safe any-time backward reachable sets using the preimage of the map $f$.
\begin{theorem}\label{thm:IterativeBRS}
   Let $\mathcal{Y},\mathcal{Z}\subseteq \mathbb{R}^{n}$, and let the sequence of sets $\{\mathcal{Y}_{k}\}_{k\in \mathbb{Z}_{+}}$ be defined as follows:  $\mathcal{Y}_{0}=\mathcal{Y}\cap \mathcal{Z}$, and 
       $\mathcal{Y}_{k+1}= f^{-1}(\mathcal{Y}_{k})\cap \mathcal{Z},~k\in \mathbb{Z}_{+}$.
   Then, 
   $
\mathcal{R}^{\mathcal{Z}}_{-}(\mathcal{Y})=\bigcup_{k\in \mathbb{Z}_{+}}\mathcal{Y}_{k}.
   $
\end{theorem}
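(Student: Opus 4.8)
The plan is to prove the two set inclusions separately, with the core of the argument being an auxiliary characterization of each $\mathcal{Y}_{k}$ obtained by induction on $k$. Concretely, I would first establish the claim that for every $k\in \mathbb{Z}_{+}$,
$$
\mathcal{Y}_{k}=\Set{x\in \mathbb{R}^{n}}{\varphi_{x}(j)\in \mathcal{Z}~\forall j\in \intcc{0;k},~\varphi_{x}(k)\in \mathcal{Y}}.
$$
Once this is in hand, taking the union over $k$ immediately yields $\bigcup_{k\in \mathbb{Z}_{+}}\mathcal{Y}_{k}=\mathcal{R}^{\mathcal{Z}}_{-}(\mathcal{Y})$, since the right-hand side of the displayed identity, unioned over all $k\in \mathbb{Z}_{+}$, is exactly the defining condition \eqref{eq:BRS} of $\mathcal{R}^{\mathcal{Z}}_{-}(\mathcal{Y})$ with $N=k$.

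For the base case $k=0$, note $\mathcal{Y}_{0}=\mathcal{Y}\cap\mathcal{Z}$ and $\varphi_{x}(0)=x$, so $x\in\mathcal{Y}_{0}$ iff $\varphi_{x}(0)\in\mathcal{Z}$ and $\varphi_{x}(0)\in\mathcal{Y}$, which matches the claim. For the inductive step, assume the characterization holds for $k$. By definition $x\in \mathcal{Y}_{k+1}$ iff $x\in\mathcal{Z}$ and $f(x)\in \mathcal{Y}_{k}$. Applying the induction hypothesis to the point $f(x)$ and using the composition identity $\varphi_{f(x)}(j)=f^{j}(f(x))=f^{j+1}(x)=\varphi_{x}(j+1)$, the condition $f(x)\in\mathcal{Y}_{k}$ becomes $\varphi_{x}(j+1)\in\mathcal{Z}$ for all $j\in\intcc{0;k}$ and $\varphi_{x}(k+1)\in\mathcal{Y}$; that is, $\varphi_{x}(i)\in\mathcal{Z}$ for all $i\in\intcc{1;k+1}$ and $\varphi_{x}(k+1)\in\mathcal{Y}$. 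Conjoining this with $x=\varphi_{x}(0)\in\mathcal{Z}$ gives precisely $\varphi_{x}(i)\in\mathcal{Z}$ for all $i\in\intcc{0;k+1}$ and $\varphi_{x}(k+1)\in\mathcal{Y}$, completing the induction.

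I expect the only point requiring care to be the index bookkeeping in the inductive step — correctly shifting the trajectory of $f(x)$ to the trajectory of $x$ via $\varphi_{f(x)}(j)=\varphi_{x}(j+1)$ and then merging the index ranges $\intcc{1;k+1}$ and $\{0\}$ into $\intcc{0;k+1}$ — but this is routine. No continuity or topological hypotheses on $f$, $\mathcal{Y}$, or $\mathcal{Z}$ are needed; the statement is purely set-theoretic. As an alternative to the union-of-characterizations argument, one could instead prove $\bigcup_{k}\mathcal{Y}_{k}\supseteq\mathcal{R}^{\mathcal{Z}}_{-}(\mathcal{Y})$ directly by, given $x\in\mathcal{R}^{\mathcal{Z}}_{-}(\mathcal{Y})$ with witness $N$, showing $x\in\mathcal{Y}_{N}$ by a short downward induction on the iterates $\varphi_{x}(N),\varphi_{x}(N-1),\dots,x$, and prove the reverse inclusion by induction on $k$ showing $\mathcal{Y}_{k}\subseteq\mathcal{R}^{\mathcal{Z}}_{-}(\mathcal{Y})$; but routing everything through the single explicit description of $\mathcal{Y}_{k}$ is cleaner.
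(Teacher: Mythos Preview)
Your proposal is correct. The route differs in organization from the paper's: the paper proves the two inclusions separately, each by its own induction. For $\mathcal{R}^{\mathcal{Z}}_{-}(\mathcal{Y})\subseteq\bigcup_{k}\mathcal{Y}_{k}$ it takes a witness $N$ and shows by induction on $k$ that $\varphi_{x}(N-k)\in\mathcal{Y}_{k}$ for all $k\in\intcc{0;N}$, concluding $x\in\mathcal{Y}_{N}$; for the reverse inclusion it takes $x\in\mathcal{Y}_{N}$ and shows by induction on $k$ that $\varphi_{x}(k)\in\mathcal{Z}$ and $\varphi_{x}(k+1)\in\mathcal{Y}_{N-k-1}$, unwinding the recursion forward. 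This is exactly the alternative you sketch at the end of your proposal. Your main route instead proves a single explicit formula for $\mathcal{Y}_{k}$ by one induction, from which both inclusions drop out at once by comparing the union with the definition \eqref{eq:BRS}. Your approach is tidier and yields a reusable description of each $\mathcal{Y}_{k}$; the paper's approach avoids stating the auxiliary characterization but at the cost of running two parallel inductions with slightly more index bookkeeping.
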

\begin{proof}
   Let $x\in \mathcal{R}_{-}^{\mathcal{Z}}(\mathcal{Y})$. Then, there exists  $N\in \mathbb{Z}_{+}$ such that $\varphi_{x}(k)\in \mathcal{Z}$ for all  $k\in \intcc{0;N}$ and $\varphi_{x}(N)\in \mathcal{Y}$. We may assume without loss of generality that $N\geq 1$. We claim that $\varphi_{x}(N-k)\in \mathcal{Y}_{k}$ for all $k\in \intcc{0;N}$. As { $\varphi_{x}(N)\in \mathcal{Y}$ and $\varphi_{x}(N)\in \mathcal{Z}$}, we have {$\varphi_{x}(N)\in \mathcal{Y}\cap \mathcal{Z}=\mathcal{Y}_{0}$}, hence the claim holds for $k=0$. Assume that the claim holds for some $k\in \intcc{0;N-1}$, that is, $\varphi_{x}(N-k)\in \mathcal{Y}_{k}$. This implies that $f(\varphi_{x}(N-k-1))\in \mathcal{Y}_{k}$, and hence $\varphi_{x}(N-k-1)\in f^{-1}(\mathcal{Y}_{k})$. But $\varphi_{x}(N-k-1)\in \mathcal{Z}$, hence $\varphi_{x}(N-k-1)=\varphi_{x}(N-(k+1))\in f^{-1}(\mathcal{Y}_{k}) \cap \mathcal{Z}=\mathcal{Y}_{k+1}$, and that proves the inclusion claim. This subsequently indicates that $x=\varphi_{x}(0)\in \mathcal{Y}_{N}\subseteq \cup_{k\in \mathbb{Z}_{+}}\mathcal{Y}_{k}$. 

 {  Now, let $x \in \cup_{k\in \mathbb{Z}_{+}}\mathcal{Y}_{k}$. Then, $x\in \mathcal{Y}_{N}$ for some $N\in \mathbb{Z}_{+}$. If $N\in \{0,1\}$, it can easily be shown that $x\in \mathcal{R}^{\mathcal{Z}}_{-}(\mathcal{Y})$. Now, assume that $N\geq 2$. We claim that $\mathcal{\varphi}_{x}(k)\in \mathcal{Z}$ and $\varphi_{x}(k+1)\in \mathcal{Y}_{N-k-1}$ for all $k\in \intcc{0;N-1}$. By the definition of $\mathcal{Y}_{N}$ ($\mathcal{Y}_{N}=f^{-1}(\mathcal{Y}_{N-1})\cap \mathcal{Z}$), we have $\varphi_{x}(0)=x\in \mathcal{Z}$ and $\varphi_{x}(1)=f(x)\in \mathcal{Y}_{N-1}$, hence the claim holds for $k=0$. Assume that $\mathcal{\varphi}_{x}(k)\in \mathcal{Z}$ and $\varphi_{x}(k+1)\in \mathcal{Y}_{N-k-1}$ hold for some $k\in \intcc{0;N-2}$. By the definition of $\mathcal{Y}_{N-k-1}$ ($\mathcal{Y}_{N-k-1}=f^{-1}(\mathcal{Y}_{N-k-2})\cap \mathcal{Z}$), we have $\varphi_{x}(k+1)\in \mathcal{Z}$ and $f(\varphi_{x}(k+1))=\varphi_{x}(k+2)\in \mathcal{Y}_{N-k-2}$, and the claim holds by induction for all $k\in \intcc{0;N-1}$. Therefore, we have   $\varphi_{x}(k)\in \mathcal{Z}$ for all $k\in \intcc{0;N-1}$ and $\varphi_{x}(N)\in \mathcal{Y}_{0}$. As $\mathcal{Y}_{0}=\mathcal{Y}\cap \mathcal{Z}$,  $\varphi_{x}(N)\in \mathcal{Z}$ and $\varphi_{x}(N)\in \mathcal{Y}$. Hence, $x\in \mathcal{R}_{-}^{\mathcal{Z}}(\mathcal{Y})$ and that completes the proof.}  
\end{proof}

In the next result, we show how the state-constrained preimage of a safe ROA preserves its invariance and safe attractivity to the origin.
\begin{theorem} \label{thm:OneStepBRSofROA}
     Let $\mathcal{S}\subseteq \mathbb{R}^{n}$ be a safe ROA  within $\mathcal{X}$ and $\mathcal{T}=f^{-1}(\mathcal{S})\cap \mathcal{X}$.  Then, 
  $
\mathcal{S} \subseteq  \mathcal{T}
  $
  and 
  $\mathcal{T}$ is also a safe ROA  within $\mathcal{X}$.
\end{theorem}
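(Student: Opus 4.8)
The plan is to treat the two assertions separately, beginning with the inclusion $\mathcal{S}\subseteq\mathcal{T}$, which is the easy half. Since $\mathcal{S}$ is a safe ROA within $\mathcal{X}$, it is invariant under $f$, i.e.\ $f(\mathcal{S})\subseteq\mathcal{S}$; this is precisely the statement $\mathcal{S}\subseteq f^{-1}(\mathcal{S})$. Combined with $\mathcal{S}\subseteq\mathcal{D}_{0}^{\mathcal{X}}\subseteq\mathcal{X}$, this yields $\mathcal{S}\subseteq f^{-1}(\mathcal{S})\cap\mathcal{X}=\mathcal{T}$.

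For the second assertion I would verify the three defining properties of a safe ROA in turn. \emph{Invariance of $\mathcal{T}$:} take $x\in\mathcal{T}$; then $f(x)\in\mathcal{S}\subseteq\mathcal{X}$, and by invariance of $\mathcal{S}$ we get $f(f(x))\in f(\mathcal{S})\subseteq\mathcal{S}$, so $f(x)\in f^{-1}(\mathcal{S})\cap\mathcal{X}=\mathcal{T}$. \emph{Interior condition:} this is immediate from the inclusion already proved, since $\mathcal{S}\subseteq\mathcal{T}$ implies $\mathrm{int}(\mathcal{S})\subseteq\mathrm{int}(\mathcal{T})$, and $0_{n}\in\mathrm{int}(\mathcal{S})$ by hypothesis.

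The remaining property, $\mathcal{T}\subseteq\mathcal{D}_{0}^{\mathcal{X}}$, is where the (minor) bookkeeping lies. Fix $x\in\mathcal{T}$ and set $y\defas f(x)$. Then $x\in\mathcal{X}$ and $y\in\mathcal{S}\subseteq\mathcal{D}_{0}^{\mathcal{X}}$, so $\varphi_{y}(k)\in\mathcal{X}$ for all $k\in\mathbb{Z}_{+}$ and $\lim_{k\to\infty}\varphi_{y}(k)=0_{n}$. Using $\varphi_{x}(k)=f^{k}(x)=f^{k-1}(f(x))=\varphi_{y}(k-1)$ for $k\geq 1$, together with $\varphi_{x}(0)=x\in\mathcal{X}$, one concludes $\varphi_{x}(k)\in\mathcal{X}$ for every $k\in\mathbb{Z}_{+}$ and $\lim_{k\to\infty}\varphi_{x}(k)=\lim_{k\to\infty}\varphi_{y}(k-1)=0_{n}$, hence $x\in\mathcal{D}_{0}^{\mathcal{X}}$.

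I do not anticipate a genuine obstacle here; the argument is a direct unwinding of the definitions of a safe ROA, of $\mathcal{D}_{0}^{\mathcal{X}}$, and of $\mathcal{R}^{\mathcal{X}}_{-}$ via $f^{-1}$. The only points requiring a little care are the index shift relating $\varphi_{x}$ to $\varphi_{f(x)}$, and remembering to include the zeroth point $x$ itself when checking safety (it lies in $\mathcal{X}$ because $x\in\mathcal{T}\subseteq\mathcal{X}$, not because of the trajectory of $f(x)$).
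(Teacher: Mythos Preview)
Your proof is correct and follows essentially the same approach as the paper's: both establish $\mathcal{S}\subseteq\mathcal{T}$ from invariance and $\mathcal{S}\subseteq\mathcal{X}$, deduce the interior condition from this inclusion, and verify $\mathcal{T}\subseteq\mathcal{D}_{0}^{\mathcal{X}}$ via the index shift $\varphi_{x}(k+1)=\varphi_{f(x)}(k)$. The only cosmetic differences are that the paper obtains invariance of $\mathcal{T}$ in one line via $f(x)\in\mathcal{S}\subseteq\mathcal{T}$ (reusing the inclusion already proved), and checks safety of $\varphi_{x}$ directly from the invariance of $\mathcal{T}\subseteq\mathcal{X}$ rather than through $\mathcal{D}_{0}^{\mathcal{X}}$-membership of $f(x)$.
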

\begin{proof}
 Let $x\in \mathcal{S}$. The invariance of  $\mathcal{S}$  under $f$ implies that $f(x)\in \mathcal{S}$ and hence, $x\in f^{-1}(\mathcal{S})$. As $\mathcal{S}\subseteq \mathcal{X}$, we then have $x\in f^{-1}(\mathcal{S})\cap \mathcal{X}=\mathcal{T}$, and that proves the first claim. Consequently, we have $0_{n}\in \mathrm{int}(\mathcal{T})$. Note that for any $x\in \mathcal{T}$, $f(x)\in \mathcal{S}\subseteq \mathcal{T}$, which implies the invariance of $\mathcal{T}$. Finally, let $x\in \mathcal{T}$.  By the invariance of $\mathcal{T}$, we have $\varphi_{x}(k)\in \mathcal{T}\subseteq\mathcal{X}~\forall k\in \mathbb{Z}_{+}$, and, by the  definition of $\mathcal{T}$,  $\varphi_{x}(1)=f(x)\in \mathcal{S}$, implying  $\lim_{k\rightarrow \infty}\varphi_{f(x)}(k)=\lim_{k\rightarrow \infty}\varphi_{x}(k+1)=0_{n}$. Hence,  $x\in \mathcal{D}_{0}^{\mathcal{X}}$. 
\end{proof}

Now, we establish the relationship between safe backward reachable sets and the safe DOA $\mathcal{D}_{0}^{\mathcal{X}}$. 
\begin{theorem}\label{thm:BRS=DOA}
  Let $\mathcal{S}\subseteq \mathbb{R}^{n}$ be a safe ROA  within $\mathcal{X}$.  Then, 
  $
\mathcal{R}^{\mathcal{X}}_{-}(\mathcal{S})=\mathcal{D}_{0}^{\mathcal{X}}.
  $
\end{theorem}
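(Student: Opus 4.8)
The plan is to prove the two set inclusions $\mathcal{R}^{\mathcal{X}}_{-}(\mathcal{S})\subseteq \mathcal{D}_{0}^{\mathcal{X}}$ and $\mathcal{D}_{0}^{\mathcal{X}}\subseteq \mathcal{R}^{\mathcal{X}}_{-}(\mathcal{S})$ separately, using the already-established Theorems \ref{thm:IterativeBRS} and \ref{thm:OneStepBRSofROA} for the first one and a short direct argument for the second.

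For $\mathcal{R}^{\mathcal{X}}_{-}(\mathcal{S})\subseteq \mathcal{D}_{0}^{\mathcal{X}}$, I would apply Theorem \ref{thm:IterativeBRS} with $\mathcal{Y}=\mathcal{S}$ and $\mathcal{Z}=\mathcal{X}$. Since $\mathcal{S}\subseteq \mathcal{X}$, the generating sequence reduces to $\mathcal{Y}_{0}=\mathcal{S}$ and $\mathcal{Y}_{k+1}=f^{-1}(\mathcal{Y}_{k})\cap\mathcal{X}$, which is exactly the one-step preimage construction of Theorem \ref{thm:OneStepBRSofROA}. A straightforward induction then shows that each $\mathcal{Y}_{k}$ is a safe ROA within $\mathcal{X}$, so in particular $\mathcal{Y}_{k}\subseteq \mathcal{D}_{0}^{\mathcal{X}}$ for every $k$; taking the union and invoking Theorem \ref{thm:IterativeBRS} yields $\mathcal{R}^{\mathcal{X}}_{-}(\mathcal{S})=\bigcup_{k\in\mathbb{Z}_{+}}\mathcal{Y}_{k}\subseteq \mathcal{D}_{0}^{\mathcal{X}}$. (Equivalently, one can argue directly: if $\varphi_{x}(k)\in\mathcal{X}$ for $k\in\intcc{0;N}$ and $\varphi_{x}(N)\in\mathcal{S}$, then invariance of $\mathcal{S}$ and $\mathcal{S}\subseteq\mathcal{D}_{0}^{\mathcal{X}}$ give $\varphi_{x}(k)=\varphi_{\varphi_{x}(N)}(k-N)\in\mathcal{S}\subseteq\mathcal{X}$ for all $k\geq N$ and $\varphi_{x}(k)\to 0_{n}$, hence $x\in\mathcal{D}_{0}^{\mathcal{X}}$.)

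For $\mathcal{D}_{0}^{\mathcal{X}}\subseteq \mathcal{R}^{\mathcal{X}}_{-}(\mathcal{S})$, take $x\in\mathcal{D}_{0}^{\mathcal{X}}$. Since $\mathcal{S}$ is a safe ROA, $0_{n}\in\mathrm{int}(\mathcal{S})$, so there is $r>0$ with $r\mathbb{B}_{n}\subseteq\mathcal{S}$. Because $\varphi_{x}(k)\to 0_{n}$, there exists $N\in\mathbb{Z}_{+}$ with $\norm{\varphi_{x}(N)}\leq r$, i.e.\ $\varphi_{x}(N)\in\mathcal{S}$. As $x\in\mathcal{D}_{0}^{\mathcal{X}}$ also forces $\varphi_{x}(k)\in\mathcal{X}$ for all $k$, in particular for $k\in\intcc{0;N}$, the defining condition of $\mathcal{R}^{\mathcal{X}}_{-}(\mathcal{S})$ in \eqref{eq:BRS} is met, so $x\in\mathcal{R}^{\mathcal{X}}_{-}(\mathcal{S})$.

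The only genuinely load-bearing step is this second inclusion, and there the crux is exploiting $0_{n}\in\mathrm{int}(\mathcal{S})$ to "catch" the tail of a merely convergent trajectory inside a ball contained in $\mathcal{S}$; once that ball is fixed, the rest follows immediately from the definitions. It is worth noting that invariance of $\mathcal{S}$ plays no role in the second inclusion — it enters (through Theorem \ref{thm:OneStepBRSofROA}) only in the first — so the proof cleanly isolates where each property of a safe ROA is used.
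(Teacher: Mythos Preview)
Your proof is correct and essentially matches the paper's. The paper argues both inclusions directly: for $\mathcal{R}^{\mathcal{X}}_{-}(\mathcal{S})\subseteq\mathcal{D}_{0}^{\mathcal{X}}$ it uses exactly your parenthetical alternative (invariance of $\mathcal{S}$ plus $\mathcal{S}\subseteq\mathcal{D}_{0}^{\mathcal{X}}$) rather than routing through Theorems~\ref{thm:IterativeBRS} and~\ref{thm:OneStepBRSofROA}, and your argument for $\mathcal{D}_{0}^{\mathcal{X}}\subseteq\mathcal{R}^{\mathcal{X}}_{-}(\mathcal{S})$ via $0_{n}\in\mathrm{int}(\mathcal{S})$ is identical to the paper's.
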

\begin{proof}
    Let $x\in \mathcal{D}_{0}^{\mathcal{X}}$.  Using the   definition of $\mathcal{D}_{0}^{\mathcal{X}}$ and   the fact that  $0_{n}$ is in the interior of {$\mathcal{S}$},  there exists $N\in \mathbb{Z}_{+}$, such that $\varphi_{x}(k)\in \mathcal{X}$ for all $k\in \intcc{0;N}$ and {$\varphi_{x}(N)\in \mathcal{S}$}, hence { $x\in \mathcal{R}^{\mathcal{X}}_{-}(\mathcal{S})$}. On the other hand, for ${x\in \mathcal{R}^{\mathcal{X}}_{-}(\mathcal{S})}$,  there exists $N\in \mathbb{Z}_{+}$, such that $\varphi_{x}(N)\in \mathcal{S}$ and $\varphi_{x}(k)\in \mathcal{X}$ for all $k\in \intcc{0;N}$. As $\mathcal{S}$ is a safe ROA, $\varphi_{x}(k)\in \mathcal{S}\subseteq \mathcal{X}$ for all $k\in \intco{N+1;\infty}$ with $\lim_{k\rightarrow \infty}\varphi_{x}(k+N)=0_{n}$. Hence,   $x\in \mathcal{D}_{0}^{\mathcal{X}}$.
\end{proof}
In view of Theorems \ref{thm:IterativeBRS}, \ref{thm:OneStepBRSofROA}, and \ref{thm:BRS=DOA}, and by using an inductive argument, we have the following result, which elucidates how the iterative computations of safe backward reachable sets enable arbitrarily precise underapproximations of the safe DOA. 
\begin{theorem}\label{thm:IterativeComputationsROA}
Define the sequence $\{\mathcal{V}_{k}\}_{k\in \mathbb{Z}_{+}}$ as follows: $\mathcal{V}_{0}=\mathcal{V}$ and $\mathcal{V}_{k+1}=f^{-1}(\mathcal{V}_{k})\cap \mathcal{X},~k\in \mathbb{Z}_{+}$. Then, each $\mathcal{V}_{k}$ is a safe ROA  within $\mathcal{X}$ for all $k\in \mathbb{Z}_{+}$, $\mathcal{V}_{k}\subseteq\mathcal{V}_{k+1}$ for all $k\in \mathbb{Z}_{+}$, and $\bigcup_{k\in \mathbb{Z}_{+}}\mathcal{V}_{k}=\mathcal{D}_{0}^{\mathcal{X}}$.
\end{theorem}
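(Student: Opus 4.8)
The plan is to prove Theorem~\ref{thm:IterativeComputationsROA} by induction on $k$, using the three preceding theorems as building blocks, and then to identify the union $\bigcup_{k}\mathcal{V}_{k}$ with a particular safe backward reachable set whose identity with $\mathcal{D}_{0}^{\mathcal{X}}$ is already established.

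\textbf{Step 1 (each $\mathcal{V}_{k}$ is a safe ROA).} I would argue by induction. The base case $\mathcal{V}_{0}=\mathcal{V}$ is a safe ROA within $\mathcal{X}$ by hypothesis. For the inductive step, assume $\mathcal{V}_{k}$ is a safe ROA within $\mathcal{X}$; then $\mathcal{V}_{k+1}=f^{-1}(\mathcal{V}_{k})\cap \mathcal{X}$ is exactly the set $\mathcal{T}$ obtained from $\mathcal{S}=\mathcal{V}_{k}$ in Theorem~\ref{thm:OneStepBRSofROA}, so $\mathcal{V}_{k+1}$ is again a safe ROA within $\mathcal{X}$. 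This single invocation of Theorem~\ref{thm:OneStepBRSofROA} does all the work for the first claim.

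\textbf{Step 2 (monotonicity).} The inclusion $\mathcal{V}_{k}\subseteq\mathcal{V}_{k+1}$ also follows directly from Theorem~\ref{thm:OneStepBRSofROA} applied with $\mathcal{S}=\mathcal{V}_{k}$ (which is legitimate precisely because Step~1 guarantees $\mathcal{V}_{k}$ is a safe ROA), since that theorem asserts $\mathcal{S}\subseteq\mathcal{T}=f^{-1}(\mathcal{S})\cap\mathcal{X}$. So Steps~1 and~2 are really one joint induction: at level $k$ we know $\mathcal{V}_{k}$ is a safe ROA, which licenses the application of Theorem~\ref{thm:OneStepBRSofROA} to obtain both that $\mathcal{V}_{k+1}$ is a safe ROA and that $\mathcal{V}_{k}\subseteq\mathcal{V}_{k+1}$.

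\textbf{Step 3 (the union equals $\mathcal{D}_{0}^{\mathcal{X}}$).} The recursion defining $\{\mathcal{V}_{k}\}$ with $\mathcal{V}_{0}=\mathcal{V}$ and $\mathcal{V}_{k+1}=f^{-1}(\mathcal{V}_{k})\cap\mathcal{X}$ is precisely the recursion of Theorem~\ref{thm:IterativeBRS} with $\mathcal{Y}=\mathcal{V}$, $\mathcal{Z}=\mathcal{X}$, together with the observation that $\mathcal{Y}_{0}=\mathcal{V}\cap\mathcal{X}=\mathcal{V}$ because $\mathcal{V}\subseteq\mathcal{X}$. Hence Theorem~\ref{thm:IterativeBRS} gives $\bigcup_{k\in\mathbb{Z}_{+}}\mathcal{V}_{k}=\mathcal{R}^{\mathcal{X}}_{-}(\mathcal{V})$. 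Since $\mathcal{V}$ is a safe ROA within $\mathcal{X}$, Theorem~\ref{thm:BRS=DOA} then yields $\mathcal{R}^{\mathcal{X}}_{-}(\mathcal{V})=\mathcal{D}_{0}^{\mathcal{X}}$, and combining the two equalities completes the proof.

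There is essentially no hard part here: the theorem is a corollary assembled from Theorems~\ref{thm:IterativeBRS}, \ref{thm:OneStepBRSofROA}, and~\ref{thm:BRS=DOA}, and the only thing to be careful about is the bookkeeping in the induction — one must verify at each level that $\mathcal{V}_{k}$ is a genuine safe ROA before invoking Theorem~\ref{thm:OneStepBRSofROA}, and one must note the harmless identification $\mathcal{V}\cap\mathcal{X}=\mathcal{V}$ so that the recursion here matches the one in Theorem~\ref{thm:IterativeBRS} exactly. The closest thing to a subtlety is simply making sure the indexing conventions ($\mathcal{Y}_0$ versus $\mathcal{V}_0$) line up, which they do.
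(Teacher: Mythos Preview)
Your proposal is correct and follows exactly the approach the paper indicates: the paper does not give a detailed proof but simply states that the result follows ``in view of Theorems~\ref{thm:IterativeBRS}, \ref{thm:OneStepBRSofROA}, and~\ref{thm:BRS=DOA}, and by using an inductive argument,'' which is precisely what you have spelled out. The only detail you added beyond the paper's one-line justification is the explicit observation that $\mathcal{V}\cap\mathcal{X}=\mathcal{V}$ so that the recursions match, which is the right bookkeeping to include.
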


\begin{remark}[Subsets of safe ROAs are useful]
 \label{rem:UASafeROA}   
Theorem \ref{thm:IterativeComputationsROA} provides an iterative approach that yields safe ROAs. While subsets of safe ROAs may not possess invariance properties, they ensure safety and attractivity . This is due to the fact, which follows from the definition of safe ROAs, that  for a safe ROA $\mathcal{S}$  and a subset $\mathcal{T}\subseteq \mathcal{S}$,  $\varphi_{x}(k)\in \mathcal{X}~\forall k\in \mathbb{Z}_{+}$, and  
      $\lim_{k\rightarrow \infty} \varphi_{x}(k)=0_{n}$ for all   $x\in \mathcal{T}$.
This  indicates that if the sets $\mathcal{V}_{k},k\in \mathbb{Z}_{+}$, in Theorem \ref{thm:IterativeComputationsROA} cannot be computed exactly, they can be replaced by underapproximations, which still provide safe attraction guarantees. 
\end{remark}
\section{Backward Reachable Sets: Implicit Representations}
\label{sec:ImplicitBRS}
In the previous section, we highlighted the general framework to underapproximate  $\mathcal{D}_{0}^{\mathcal{X}}$. Herein, we {provide} the sublevel set representations of the resulting underapproximations, where we impose the following additional assumption: 
\begin{assumption}\label{Assump:XandVsublevelsets}
   The sets $\mathcal{X}$ and $\mathcal{V}$ are 1-sublevel sets of the given  functions $\theta\colon \mathbb{R}^{n}\rightarrow \mathbb{R}$ and $v\colon \mathbb{R}^{n}\rightarrow \mathbb{R}$, respectively (i.e., $\mathcal{X}=\Psi(\theta,1),~\mathcal{V}=\Psi(v,1)$).
\end{assumption}

The next result provides closed-form formulas for the sublevel set representations of the underapproximations obtained in Theorem \ref{thm:IterativeComputationsROA}.

\begin{theorem}\label{thm:ImplicitRepresentation}  Define the sequence $\{\mathcal{V}_{k}\}_{k\in \mathbb{Z}_{+}}$ as in Theorem \ref{thm:IterativeComputationsROA}. Then,
    $
\mathcal{V}_{k}=\Set{x\in \mathbb{R}^{n}}{v_{k}(x)\leq 1},~k\in \mathbb{Z}_{+},
    $
    where the functions $v_{k}\colon \mathbb{R}^{n}\rightarrow \mathbb{R}$, $k\in \mathbb{Z}_{+}$, are defined as follows: $v_{0}(\cdot)=v(\cdot)$, and, for all $x\in \mathbb{R}^{n}$ and $~k\in \mathbb{Z}_{+}$,
    \begin{equation}\label{eq:VkFormula1}
v_{k+1}(x)=\max\{\theta(x),v_{k}(f(x))\}
\end{equation}
or, explicitly,
\begin{equation}\label{eq:VkFormula2}
v_{k+1}(x)=\max\{\max_{i\in \intcc{0;k}} \theta(\varphi_{x}(i)), v(\varphi_{x}(k+1))\}.
    \end{equation}
\end{theorem}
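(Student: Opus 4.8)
The plan is to prove the set identity $\mathcal{V}_{k}=\Psi(v_{k},1)$ by induction on $k\in \mathbb{Z}_{+}$, letting Lemma \ref{Lem:PreImageSLS} carry all the real content, and then to obtain the explicit formula \eqref{eq:VkFormula2} by a second, separate induction that merely unrolls the recursion \eqref{eq:VkFormula1}.

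For the first induction, the base case $k=0$ is immediate: $\mathcal{V}_{0}=\mathcal{V}=\Psi(v,1)=\Psi(v_{0},1)$ by Assumption \ref{Assump:XandVsublevelsets} and the definition $v_{0}=v$. For the inductive step, assuming $\mathcal{V}_{k}=\Psi(v_{k},1)$, I would rewrite $\mathcal{V}_{k+1}=f^{-1}(\mathcal{V}_{k})\cap \mathcal{X}=f^{-1}(\Psi(v_{k},1))\cap \Psi(\theta,1)$, using $\mathcal{X}=\Psi(\theta,1)$ from Assumption \ref{Assump:XandVsublevelsets}, and then invoke Lemma \ref{Lem:PreImageSLS} with $g=v_{k}$, $h=\theta$, the map $f$, and $c=1$. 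This gives at once $\mathcal{V}_{k+1}=\Psi(\tilde{g},1)$ with $\tilde{g}(\cdot)=\max\{\theta(\cdot),v_{k}(f(\cdot))\}$, which is exactly $v_{k+1}$ as defined in \eqref{eq:VkFormula1}, closing the induction.

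For the explicit formula, I would induct on $k$ again. The case $k=0$ reads $v_{1}(x)=\max\{\theta(x),v(f(x))\}=\max\{\theta(\varphi_{x}(0)),v(\varphi_{x}(1))\}$, which is \eqref{eq:VkFormula2} since $\intcc{0;0}=\{0\}$. For the step, the one observation needed is the trajectory-shift identity $\varphi_{f(x)}(i)=f^{i}(f(x))=f^{i+1}(x)=\varphi_{x}(i+1)$; substituting the inductive hypothesis for $v_{k}$ evaluated at $f(x)$ into \eqref{eq:VkFormula1}, re-indexing the inner maximum from $\intcc{0;k-1}$ to $\intcc{1;k}$, and absorbing the remaining term $\theta(x)=\theta(\varphi_{x}(0))$ yields $\max\{\max_{i\in \intcc{0;k}}\theta(\varphi_{x}(i)),v(\varphi_{x}(k+1))\}$, i.e.\ \eqref{eq:VkFormula2}.

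I do not expect a genuine obstacle: the argument is entirely powered by Lemma \ref{Lem:PreImageSLS}, and the rest is bookkeeping with the iteration index and the trajectory shift. The only point requiring mild care is to keep the two inductions conceptually separate — first the sublevel-set identity via the lemma, then the closed form by unrolling \eqref{eq:VkFormula1} — rather than trying to prove \eqref{eq:VkFormula2} directly in tandem with the set identity.
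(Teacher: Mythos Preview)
Your proposal is correct and follows essentially the same approach as the paper: the sublevel-set identity $\mathcal{V}_{k}=\Psi(v_{k},1)$ is obtained by induction using Lemma~\ref{Lem:PreImageSLS} (the paper compresses this into one line), and the explicit formula \eqref{eq:VkFormula2} is established by a separate induction that unrolls \eqref{eq:VkFormula1} via the trajectory-shift identity $\varphi_{f(x)}(i)=\varphi_{x}(i+1)$. The only cosmetic difference is that the paper indexes the second induction from $v_{k+1}$ to $v_{k+2}$ rather than from $v_{k}$ to $v_{k+1}$, but the computation is identical.
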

\begin{proof}
 The definition of $v_{0}$ follows trivially from the fact that $\mathcal{V}_{0}=\mathcal{V}$ and  Assumption \ref{Assump:XandVsublevelsets}.  Equation \eqref{eq:VkFormula1} follows from the definition of $\{\mathcal{V}_{k}\}_{k\in \mathbb{Z}_{+}}$ and Lemma \ref{Lem:PreImageSLS}. To prove \eqref{eq:VkFormula2}, we use induction. Let $x\in \mathbb{R}^{n}$. For $k=0$ (i.e., for  $v_{1}$), we have, using \eqref{eq:VkFormula1}, 
$
v_{1}(x)=\max\{\theta(x),v_{0}(f(x))\}=\max\{\theta(\varphi_{x}(0)),v_{0}(\varphi_{x}(1))\}
=\max\{\max_{j\in \intcc{0;0}}\theta(\varphi_{x}(j)),v(\varphi_{x}(1))\}$. Assuming \eqref{eq:VkFormula2} holds for some $k\in \mathbb{Z}_{+}$, then it holds for $k+1$ (i.e., for $v_{k+2}$) as follows: by   \eqref{eq:VkFormula1}, we have $ v_{k+2}(x)=\max\{\theta(x),v_{k+1}(f(x))\}$, where
$
    v_{k+1}(f(x))=\max\{\max_{i\in \intcc{0;k}} \theta(\varphi_{f(x)}(i)), v(\varphi_{f(x)}(k+1))\}= \max\{\max_{i\in \intcc{1;k+1}} \theta(\varphi_{x}(i)), v(\varphi_{x}(k+2))\}.
$
It then follows that
$
   v_{k+2}(x)= \max\{\max_{i\in \intcc{0;k+1}} \theta(\varphi_{x}(i)), v(\varphi_{x}(k+2))\},
$
and that completes the proof.
\end{proof}
\subsection{Efficient pointwise evaluation}
Theorem \ref{thm:ImplicitRepresentation} and, in particular, equation  \eqref{eq:VkFormula1} provide a pathway for efficient pointwise evaluations of the functions $v_{k},~k\in \mathbb{N}$, characterizing the underapproximations of the safe DOA $\mathcal{D}_{0}^{\mathcal{X}}$. Such  evaluations can be done recursively as illustrated in Algorithm \ref{Alg:EvaluatingVk}, which is adapted from \cite{rakovic2022implicit}.   
\begin{algorithm}
\SetAlgoLined
\KwData{$x\in \mathbb{R}^{n},~k\in \mathbb{N},~\theta,~v,~f$}
  $y \gets x$, $i\gets 1$, $Z\gets 0_{k+1}$

 \While{$i\leq k$}{
 
$ Z_{i}\gets \theta(y)$,
$y \gets f(y)$, 
$i\gets i+1$}
$Z_{i}\gets v(y)$,
 $v_{k}(x)\gets \max_{i\in \intcc{1;k+1}} Z_{i} $ 
 
\KwResult{$v_{k}(x)$}
 \caption{Evaluating $v_{k}(x)$}
 \label{Alg:EvaluatingVk}
 \end{algorithm}

\begin{remark}[Underapproximating sublevel sets]
We  observe from Theorem \ref{thm:ImplicitRepresentation} that the complexity of the formulas of $v_{k},~k\in \mathbb{Z}_{+}$, increases as $k$ increases. However, this increase in complexity does not have significant detrimental effect when it comes to pointwise evaluations, which can be done recursively according to Algorithm \ref{Alg:EvaluatingVk}. If it is of interest to impose bounded complexity (e.g., to enable a relatively scalable set inclusion verification), the functions $v_{k},~k\in \mathbb{Z}_{+}$,  can be replaced with bounding functions $\tilde{v}_{k},~k\in \mathbb{Z}_{+}$ (i.e., $\tilde{v}_{k}(\cdot)\geq {v}_{k}(\cdot)$), with reduced complexity. The  1-sublevel sets of the bounding functions are subsets of the safe ROAs $\mathcal{V}_{k},~k\in \mathbb{Z}_{+}$\footnote{If $\tilde{v}_{k}(\cdot)\geq v_{k}(\cdot)$, then $\tilde{v}_{k}(x)\leq 1$, for $x\in \mathbb{R}^{n}$, implies that  ${v}_{k}(x)\leq 1$. Hence, $\{x\in \mathbb{R}^{n}|\tilde{v}_{k}(x)\leq 1\}\subseteq \{x\in \mathbb{R}^{n}|v_{k}(x)\leq 1\}=\mathcal{V}_{k}$. }, hence they provide safe attractivity guarantees as highlighted in Remark \ref{rem:UASafeROA}. 

Fix $k \in \mathbb{N}$, { where the value of $k$ should not be intolerably large to enable handling the function compositions $f^{i}(\cdot),~i\in \intcc{0;k}$, symbolically}. We may find a function $\tilde{v}_{k}$, with a fixed template (e.g., sum-of-squares polynomial of specified degree), that bounds $v_{k}$ as follows. Using equation \eqref{eq:VkFormula2}, $\tilde{v}_{k}$ should satisfy 
$
\tilde{v}_{k}(\cdot) \geq \theta(f^{i}(\cdot))~\forall i\in \intcc{0,k-1}, 
$
and 
$
\tilde{v}_{k}(\cdot) \geq v(f^{k}(\cdot)).
$
{ In the case when $f$, $\theta$, and $v$ are polynomial functions, these inequalities can be cast  as constraints of a sum-of-squares  optimization problem that results in $\tilde{v}_{k}$.}

\end{remark}
\begin{remark}[Controlled systems]
    While our approach is restricted to  discrete-time autonomous systems, it can be useful in providing estimates of  safe null controllability domains for  discrete-time controlled systems. This can be done by integrating  a controlled system with a stabilizing feedback controller and then analyzing  the safe DOA of the closed-loop system. In Section \ref{sec:NumericalExamples}, we provide an example illustrating this idea.
\end{remark}
\section{Initial Safe Region of Attraction}
\label{sec:EllipsoidalROA}
In this section, we demonstrate how to obtain a safe ROA $\mathcal{V}$ using quadratic Lyapunov functions under the following additional assumptions on $f$: 
\begin{assumption} \label{Assump:fSmooth}
    $f$ is twice continuously differentiable over $\mathbb{R}^{n}$, and all the eigenvalues of the Jacobian of $f$ at $0_{n}$, denoted by $Df(0)$, are located  in the open unit ball 
 of the complex plane.  
\end{assumption}
\begin{remark}
    It should be noted that our construction of $\mathcal{V}$ herein can be replaced by  other constructions from the literature that are suited for systems that do not satisfy Assumption \ref{Assump:fSmooth}, as our framework highlighted by Theorems \ref{thm:IterativeComputationsROA} and \ref{thm:ImplicitRepresentation} is quite general.
\end{remark}

Our discussion herein adapts the Lyapunov analysis  in  \cite{bof2018lyapunov}.
Let $A=Df(0)$ and rewrite $f$ as 
$f(x)=Ax+h(x),~x\in \mathbb{R}^{n},$
where $h(\cdot)=f(\cdot)-A(\cdot)$. Let $Q\in \mathcal{S}^{n}_{++}$ be given, and $P\in \mathcal{S}^{n}_{++}$ be  the solution to the discrete-time algebraic Lyapunov equation
 $
A^{\intercal}PA-P=-Q.
 $
 Define the candidate Lyapunov function ${\nu}\colon \mathbb{R}^{n}\rightarrow \mathbb{R}$ as 
 $
{\nu}(x)\defas x^{\intercal}Px
=\norm{P^{\frac{1}{2}}x}^{2},~x\in \mathbb{R}^{n}$, which is positive definite over $\mathbb{R}^{n}$. Then, for all $~x\in \mathbb{R}^{n}$, 
$
\nu(f(x))-\nu(x)=-x^{\intercal}Qx+2x^{\intercal}A^{\intercal}P h(x)+h^{\intercal}(x)Ph(x).
$
Let $\mathcal{B}\subseteq \mathcal{X}$ be a hyper-rectangle with vector radius $R_{\mathcal{B}}\in \mathbb{R}^{n}_{+}\setminus \{0_{n}\}$, i.e., $\mathcal{B}=\Hintcc{-R_{\mathcal{B}},R_{\mathcal{B}}}$ (such a hyper-rectangle exists due to Assumption \ref{Assump:0InInterior}).  We can find a vector $\eta_{\mathcal{B}}\in \mathbb{R}_{+}^{n}$ (by bounding the Hessian of $f$ over $\mathcal{B}$, e.g, using  interval arithmetic) such that 
$
\abs{h(x)}\leq \frac{\norm{x}^{2}}{2}\eta_{\mathcal{B}},~x\in \mathcal{B}.
$
The above bound can be used in providing estimates of  $2x^{\intercal}A^{\intercal}P h(x)$ and $h^{\intercal}(x)Ph(x)$ as follows:  
$
2x^{\intercal}A^{\intercal}P h(x)
\leq \sqrt{\nu(x)}\norm{\abs{P^{\frac{1}{2}}}\eta_{\mathcal{B}}}\norm{P^{\frac{1}{2}}AP^{-\frac{1}{2}}}\norm{x}^{2}
$
and 
$
h^{\intercal}(x)Ph(x) 
\leq \norm{P}\norm{\eta_{\mathcal{B}}}^{2}\frac{\norm{x}^{2}\nu(x)}{4\underline{\lambda}(P)},
$
$x\in \mathcal{B}$. By defining $d=\underline{\lambda}(Q)-\varepsilon$, where $\varepsilon>0$ is a small parameter, we consequently have, for $x\in \mathcal{B}$,
$
\nu(f(x))-\nu(x)\leq -\varepsilon \norm{x}^{2}+\sqrt{\nu(x)}\norm{\abs{P^{\frac{1}{2}}}\eta_{\mathcal{B}}}\norm{P^{\frac{1}{2}}AP^{-\frac{1}{2}}}\norm{x}^{2}
+\norm{P}\norm{\eta_{\mathcal{B}}}^{2}\frac{\norm{x}^{2}\nu(x)}{4\underline{\lambda}(P)}
-d \norm{x}^{2}$.

We need to search for our safe ROA  $\mathcal{V}$ within $\mathcal{B}$, where we represent $\mathcal{V}$ as a sublevel set of $\nu$ (i.e., $\mathcal{V}=\Psi(\nu,c)$ for some $c\in \mathbb{R}_{+}\setminus \{0\}$), and we ensure the Lyapunov condition $\nu(f(x))-\nu(x)<0$ holds for all $x\in\mathcal{V}\setminus \{0_{n}\}$. To find $\mathcal{V}$ (or equivalently $c$), we impose that 
$
\sqrt{\nu(x)}\norm{\abs{P^{\frac{1}{2}}}\eta_{\mathcal{B}}}\norm{P^{\frac{1}{2}}AP^{-\frac{1}{2}}}\norm{x}^{2} +\norm{P}\norm{\eta_{\mathcal{B}}}^{2}\frac{\norm{x}^{2}\nu(x)}{4\underline{\lambda}(P)}-d \norm{x}^{2} \leq 0
$
or 
$
\sqrt{\nu(x)}\norm{\abs{P^{\frac{1}{2}}}\eta_{\mathcal{B}}}\norm{P^{\frac{1}{2}}AP^{-\frac{1}{2}}}+\norm{P}\norm{\eta_{\mathcal{B}}}^{2}\frac{\nu(x)}{4\underline{\lambda}(P)}\leq d 
$
for all $x\in \mathcal{V}$. This is fulfilled if $c \leq c_{1}$, where
$
c_{1}\defas (-\beta+\sqrt{\beta^2+4\alpha d})^2/(2\alpha)^{2},
$ $\alpha\defas\frac{\norm{P}\norm{\eta_{\mathcal{B}}}^{2}}{4\underline{\lambda}(P)}$ and $\beta\defas \norm{|P^{\frac{1}{2}}|\eta_{\mathcal{B}}}\norm{P^{\frac{1}{2}}AP^{-\frac{1}{2}}}$.  
To ensure that $\mathcal{V}$ is inside $\mathcal{B}$, we  impose the condition 
$
\nu(x)\leq { \mathcal{R}_{\mathcal{B},i}^{2}}/{P^{-1}_{i,i}},~i\in \intcc{1;n},
$
for all $x\in \mathcal{V}$, and that is guaranteed if $c\leq c_{2}$, where  
$
c_{2}\defas \min_{i\in \intcc{1;n}} { \mathcal{R}_{\mathcal{B},i}^{2}}/{P^{-1}_{i,i}}.
$
We can then choose 
$
c=\min\{c_{1},c_{2}\}$. 
To write $\mathcal{V}$ as a 1-sublevel set, we  define $v\colon \mathbb{R}^{n}\rightarrow \mathbb{R}$ as 
$
v(x)=\nu(x)/c,~x\in \mathbb{R}^{n}.
$
 Then, our safe ROA $\mathcal{V}$ is given by
 $
\mathcal{V}=\{x\in \mathbb{R}^{n}| v(x)\leq 1\}\subseteq \mathcal{X}.
 $

\section{Numerical Examples}
\label{sec:NumericalExamples}
In this section, we illustrate our approach through two numerical examples. Our  proposed method is implemented in MATLAB. In our computations of the initial safe ROAs according to Section \ref{sec:EllipsoidalROA}, the vector $\eta_{\mathcal{B}}$ is obtained using interval arithmetic bounds using the reachability software CORA  \cite{althoff2015introduction}, and the matrix $P$ is obtained using the MATLAB function \texttt{dlyap}.

\subsection{Two-machine system}
We consider a discrete version of the two-dimensional two-machines power system studied in \cite{vannelli1985maximal,willems1968improved}. The discrete version is obtained through Euler discretization and is given by \eqref{eq:System}, with
$
f(x_{k})=    \begin{pmatrix}
    x_{1,k}+\Delta_{t}  x_{2,k}\\
       x_{2,k}-\Delta_{t}( \frac{x_{2,k}}{2}+\sin(x_{1,k}+\frac{\pi}{3})-\sin(\frac{\pi}{3}))
    \end{pmatrix},
$
where the time step $\Delta_{t}$ is set to be $0.1$. We aim to estimate the safe DOA  $\mathcal{D}_{0}^{\mathcal{X}}$, where $\mathcal{X}=\Hintcc{-[1~0.5]^{\intercal},[1~0.5]^{\intercal}}$. Note that $\mathcal{X}$ can be written as a 1-sublevel set, with $\theta$ given by $\theta(x)=\norm{E x}_{\infty},~x\in \mathbb{R}^{2}$, where 
$
E=\mathrm{diag}([1~2]^{\intercal}).
$
Following the procedure described in Section \ref{sec:EllipsoidalROA}, we set $\mathcal{B}=\mathcal{X}$,  $Q=\id_{2}$, and we  obtained 
$
P =
   \begin{pmatrix} 
   21.9377 &  10.8408\\
   10.8408  & 33.6321
\end{pmatrix},
$
 $c=2.9345$, and a safe ROA  $\mathcal{V}=\Set{x\in \mathbb{R}^{2}}{x^{\intercal}Px/c\leq 1}$. We then computed  safe ROAs according to Theorem \ref{thm:IterativeComputationsROA} with $80$ iterations, where their sublevel set representations are given by Theorem \ref{thm:ImplicitRepresentation}. The safe ROAs $\mathcal{V}_{0},~\mathcal{V}_{30},~\mathcal{V}_{60}$, and $\mathcal{V}_{80}$ are depicted in Fig.  \ref{fig:two_machine_system}. Observe the monotonicity of the computed  ROAs and their satisfaction of the state constraints given by the set $\mathcal{X}$. We picked three initial conditions
 $
x_{0}^{(1)}=[1~-0.2]^{\intercal}
 $, $x_{0}^{(2)}=[-0.2~0.5]^{\intercal}$, and  $x_{0}^{(3)}=[-1~0]^{\intercal}$ inside the safe set $\mathcal{X}$, and we verified, using Algorithm \ref{Alg:EvaluatingVk}, that $x_{0}^{(1)},~x_{0}^{(2)}\in \mathcal{V}_{80}$, but $x_{0}^{(3)}\notin  \mathcal{V}_{80}$. Then, we generated trajectories starting from the picked initial conditions. Figure \ref{fig:two_machine_system} shows how the trajectories starting from  $x_{0}^{(1)}$ and  $x_{0}^{(2)}$ stay in the safe set $\mathcal{X}$ and converge to $0_{2}$, whereas the trajectory starting from $x_{0}^{(3)}$ leaves the safe set before returning to it and then converging to $0_{2}$. This highlights the usefulness of the safe ROAs obtained by our proposed approach in providing safe attraction guarantees.
\begin{figure}
    \centering
    \includegraphics[width=0.8\linewidth]{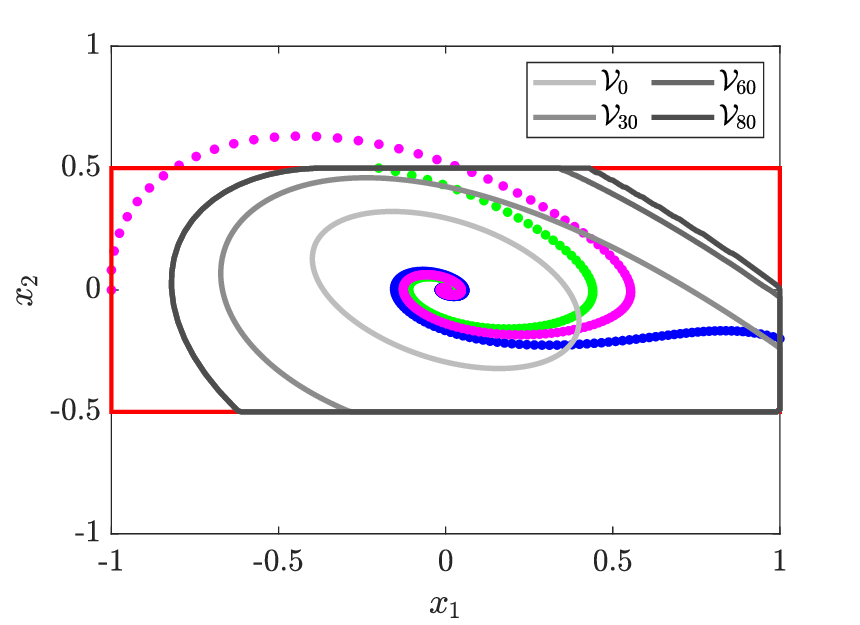}
    \caption{Estimates of the safe DOA of the two-machine system inside $\mathcal{X}$ (red), and generated trajectories starting from $\mathcal{X}$ (blue, green, and magenta). }
    \label{fig:two_machine_system}
\end{figure}
\subsection{Cart-pole system}
Herein, we consider a discrete-time version of the four-dimensional controlled cart-pole system given in \cite{spong1996energy} of the form $x_{k+1}=\tilde{f}(x_{k},u_{k})$, where 
\begin{equation}\label{eq:CartPole}
     \tilde{f}(x_{k},u_{k}) =    \begin{pmatrix}
    x_{1,k}+\Delta_{t}  x_{2,k}\\
x_{2,k}+\Delta_{t}u_{k}\\
       x_{3,k}+\Delta_{t} x_{4,k}\\
       x_{4,k}+\Delta_{t}(\sin(x_{3,k})-\cos(x_{3,k})u_{k})
    \end{pmatrix},    
\end{equation}
 $x_{1,k}$ and $x_{2,k}$ are the cart's position and velocity, respectively, $x_{3,k}$ and $x_{4,k}$, are the pole's angle (measured from the upright position) and angular velocity, respectively, and $u_{k}$ is the control input. The time step $\Delta_{t}$ is set to be $0.1$. It is required that the states of the system stay in the safe set $
\tilde{\mathcal{X}}=\Hintcc{-R_{\tilde{\mathcal{X}}},R_{\tilde{\mathcal{X}}}},
$
where
$
R_{\tilde{\mathcal{X}}}=\begin{bmatrix}
    0.1&0.1&\frac{\pi}{4}&0.1
\end{bmatrix}^{\intercal},
$
and  the control input satisfies the constraint
$
u_{k}\in \mathcal{U}=\intcc{-1,1}~\forall k\in \mathbb{Z}_{+}.
$
Our goal herein is to safely stabilize the system around the origin by implementing a linear feedback control and estimate the DOA of the closed-loop system, where the state and input constraints are fulfilled. We linearized the system at the origin, with zero control input, and computed a state-feedback control, through solving a  discrete-time algebraic Riccati equation\footnote{We used the MATLAB function \texttt{idare} to solve the mentioned equation.}, which resulted in the gain matrix 
$
K =
 \begin{bmatrix}  1.6897&   6.2464&  11.3886&  11.4026
\end{bmatrix}.
$
We then substituted $u_{k}=K x_{k}$ into \eqref{eq:CartPole}, and we obtained a closed-loop system of the form \eqref{eq:System}, where $f(x_{k})=\tilde{f}(x_{k},K x_{k})$. For the closed-loop system, the safe set $\mathcal{X}$ accounts for the state and input constraints of the open-loop system and is given as a 1-sublevel set of the function $\theta$ given by 
$
\theta(x)=\norm{\begin{pmatrix}
    K^{\intercal} &
    E^{\intercal}
\end{pmatrix}^{\intercal}x}_{\infty},~x\in \mathbb{R}^{4},
$
where $E=(\mathrm{diag}(R_{\tilde{\mathcal{X}}}))^{-1}$. We then followed the procedure given in Section \ref{sec:EllipsoidalROA} to obtain an ellipsoidal safe ROA. A hyper-rectangle that can be used in the estimation of the initial safe ROA $\mathcal{V}$ is given by $
\mathcal{B}=\left\{x\in \mathbb{R}^{4}\vert \norm{\begin{pmatrix}
    K^{\intercal}&
    E^{\intercal}
\end{pmatrix}^{\intercal}}_{\infty} \norm{x}_{\infty}\leq 1\right\}$.
We set $Q=\id_{4}$, and we obtained
$$
P=   \begin{pmatrix}
35.6188&   56.5630&   60.0805&   59.9877\\
   56.5630&  135.0700&  147.0047&  146.5897\\
   60.0805&  147.0047&  174.9002&  163.9973\\
   59.9877&  146.5897&  163.9973&  163.6202
   \end{pmatrix},
$$
 $c=0.0312$, and an initial safe ROA  $\mathcal{V}=\Set{x\in \mathbb{R}^{4}}{x^{\intercal}Px/c\leq 1}$. We  computed  safe ROAs according to Theorem \ref{thm:IterativeComputationsROA} with $60$ iterations, where their sublevel set representations are given by Theorem \ref{thm:ImplicitRepresentation}.
 \begin{figure}
    \centering
    \includegraphics[width=0.49\linewidth]{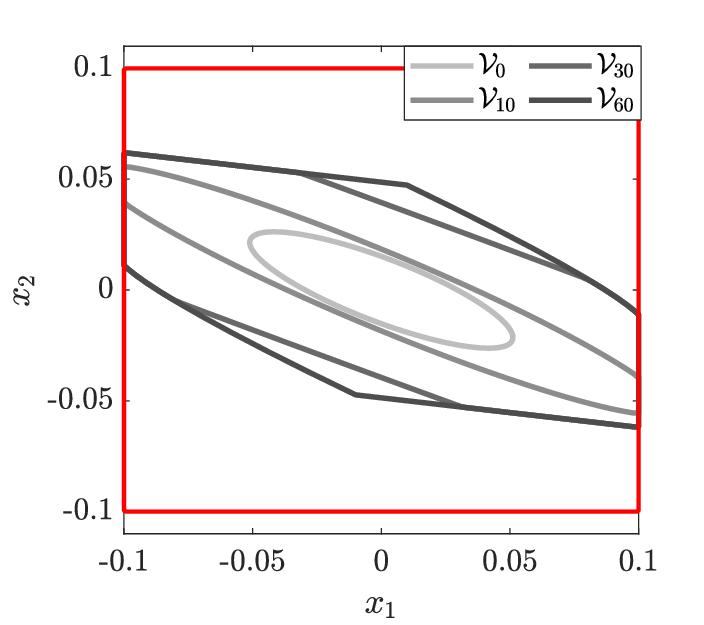} \includegraphics[width=0.49\linewidth]{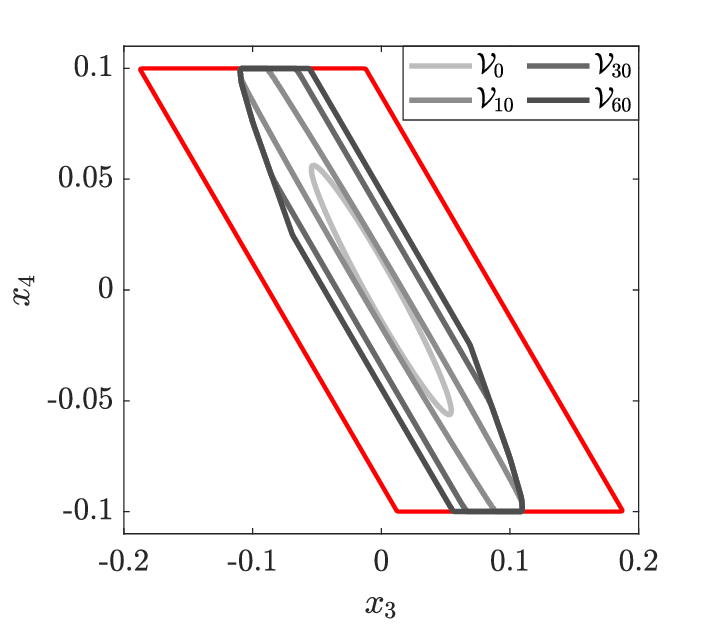}
    \caption{$x_{1}-x_{2}$ (left) and $x_{3}-x_{4}$ (right) cross-sections, with $(x_{3},x_{4})=(0,0)$ and $(x_{1},x_{2})=(0,0)$, respectively, of  the safe ROAs of the closed-loop cart-pole system inside $\mathcal{X}$ (red).}
    \label{fig:cart_pole}
\end{figure}
 Cross-sections of the  safe ROAs $\mathcal{V}_{0},~\mathcal{V}_{10},~\mathcal{V}_{30}$, and $\mathcal{V}_{60}$ are depicted in Fig. \ref{fig:cart_pole}. Fig. \ref{fig:cart_pole} displays the monotonicity of the resulting safe ROAs, with respect to set inclusion, and the satisfaction of the state constraints of the closed-loop cart-pole system.  We picked two initial conditions
 $
x_{0}^{(1)}=[0.1~-0.02~0~0]^{\intercal}
 $ and  $x_{0}^{(2)}=[-0.05~-0.05~0~0]^{\intercal}$ inside the set $\mathcal{X}$, and we verified, using Algorithm \ref{Alg:EvaluatingVk}, that $x_{0}^{(1)}\in \mathcal{V}_{60}$, but $x_{0}^{(2)}\notin  \mathcal{V}_{60}$. Then, we generated trajectories starting from the picked initial conditions, where the safety and convergence of the generated trajectories are verified by evaluating the function $\theta_{0}$ along the generated trajectories. Fig. \ref{fig:Theta_profile} shows how the trajectory starting from $x_{0}^{(1)}$ stays inside the safe set $\mathcal{X}$, converging to the origin \footnote{The safe set  $\mathcal{X}$ for the closed-loop system is compact, with the origin being in its interior, and the associated function $\theta$  is continuous, satisfying $\theta(0_{4})=0$ and $\theta(x)>0~\forall x\in \mathbb{R}^{4}\setminus \{0_{4}\}$. This implies that if $\{y_{k}\}_{k\in \mathbb{Z}_{+}}$ is a sequence with values in $\mathcal{X}$, and $\lim_{k\rightarrow \infty}\theta(y_{k})= 0$, then $\lim_{k\rightarrow \infty}y_{k}= 0_{4}$.}, whereas the trajectory starting from $x_{0}^{(2)}$ leaves the safe set. This again displays the effectiveness of the safe  ROAs obtained by our approach in certifying safe attraction.  
 \begin{figure}
     \centering
     \includegraphics[width=0.9\linewidth]{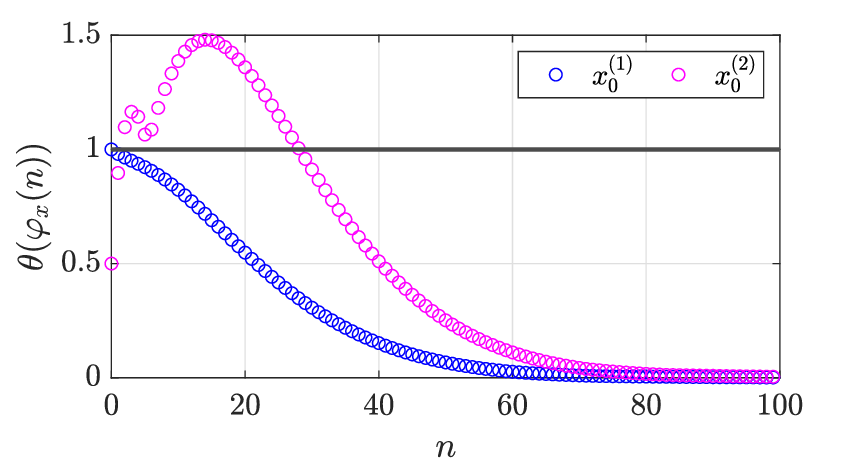}
     \caption{Profile of  $\theta$ along the trajectories starting from $x_{0}^{(1)}$ and $x_{0}^{(2)}$.}
     \label{fig:Theta_profile}
 \end{figure}

\section{Conclusion}
\label{sec:Conclusion}
In this paper, we proposed an iterative approach to under-estimate safe DOAs for general discrete-time autonomous nonlinear systems using implicit representations of backward reachable sets. The sets resulting from our iterative approach are monotonic, with respect to set inclusion, and are themselves safe regions of attraction,  with sublevel set representations, which are efficient for pointwise inclusion verification.

In future work, we aim to extend/adapt this framework to study robust domains of  attraction and domains of  null-controllability for perturbed and controlled discrete-time systems, respectively, which typically necessitate solving the computationally challenging Bellman-type equations \cite{xue2020characterization}.

\bibliographystyle{ieeetr}

\end{document}